\documentclass[12pt]{amsart}
\usepackage[utf8]{inputenc}

\usepackage{latexsym}
\usepackage{amsmath}
\usepackage{amssymb, amscd, amsthm, mathtools}
\usepackage[all]{xy}
\usepackage{multirow}
\usepackage{xcolor} 
\usepackage{tikz}
\usepackage{tikz-cd}
\usepackage[shortlabels, inline]{enumitem}
\SetEnumerateShortLabel{a}{\textup{(\alph*)}}
\SetEnumerateShortLabel{A}{\textup{(\Alph*)}}
\SetEnumerateShortLabel{1}{\textup{(\arabic*)}}
\SetEnumerateShortLabel{i}{\textup{(\roman*)}}
\SetEnumerateShortLabel{I}{\textup{(\Roman*)}}

\newcommand{\Z}{\ensuremath{\mathbb Z}}

\newcommand{\Q}{\ensuremath{\mathbb Q}}
\newcommand{\cO}{{\mathcal O}}

\newtheorem{theorem}{Theorem} 

\newtheorem{proposition}[theorem]{Proposition}
\newtheorem{Conjecture}[theorem]{Conjecture} 
\newtheorem*{remark}{Remark} 
\newtheorem{lemma}[theorem]{Lemma}
\newtheorem{corollary}[theorem]{Corollary}

\begin{document}
\title[Divisibility of class numbers]{{Divisibility of class numbers of quadratic fields and a conjecture of Iizuka}}
\author{Yi Ouyang$^{1,2}$ and Qimin Song$^1$}

\address{$^1$School of Mathematical Sciences, Wu Wen-Tsun Key Laboratory of Mathematics,   University of Science and Technology of China, Hefei 230026, Anhui, China}

\address{$^2$Hefei National Laboratory, University of Science and Technology of China, Hefei 230088, China}
	
\email{yiouyang@ustc.edu.cn}	
\email{sqm2020@mail.ustc.edu.cn}
	
\thanks{Partially supported by NSFC (Grant No. 12371013) and  Innovation Program for Quantum Science and Technology (Grant No. 2021ZD0302902).}
	
\subjclass[2020]{11R29, 11R11}
\keywords{class numbers, quadratic fields}
	
\date{}
\maketitle

\begin{abstract} 
	
Assume $x,\ y,\ n$ are positive integers and $n$ is odd.	
In this note, we  show that  the class number of the imaginary quadratic field $\Q(\sqrt{x^{2}-y^{n}})$ is divisible by $n$ for fixed $x, n$ if $\gcd(2x,y)=1$ and  $y>C$ where $C$ is a constant depending only on $x$ and $n$. Based on this result, for any odd integer $n$ and any positive integer $m$,  we construct an infinite family of $m+1$ successive imaginary quadratic fields  $\Q(\sqrt{d})$, $\Q(\sqrt{d+1^{2}})$,  $\cdots$, $\Q(\sqrt{d+m^{2}})\ (d\in \Z)$ whose class numbers are all divisible by $n$.
\end{abstract}

\bigskip	
The $n$-divisibility of class numbers of quadratic fields for a given positive integer $n$ is well studied in the literature (see for example ~\cite{ref1, ref5, ref8}). 
Further more, Iizuka~\cite{ref4} studied the $p$-divisibility of class number of pairs of quadratic fields.  He constructed an infinite family of pairs of imaginary quadratic fields $\Q(\sqrt{d})$ and $\Q(\sqrt{d+1})$ with $d\in \Z$ whose class numbers are divisible by $3$. Based on  this result, he  posed the following conjecture:
	
\begin{Conjecture}
For any prime number $p$ and any positive integer $m$, there exists an infinite family of $m+1$ successive imaginary quadratic fields $\Q(\sqrt{d})$, $\Q(\sqrt{d+1})$, $\cdots$, $\Q(\sqrt{d+m})$ with $d\in \Z$ whose class numbers are divisible by $p$.
\end{Conjecture}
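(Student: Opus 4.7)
The plan is to invoke the paper's main divisibility theorem simultaneously for each of the $m+1$ fields $\Q(\sqrt{d+j})$. Concretely, I would seek an infinite family of tuples $(d, x_0, y_0, \ldots, x_m, y_m) \in \Z^{2m+3}$ with
\[
d + j = x_j^2 - y_j^p, \qquad \gcd(2 x_j, y_j) = 1, \qquad y_j > C(x_j, p)
\]
holding for every $j = 0, 1, \ldots, m$; then the main theorem forces $p \mid h(\Q(\sqrt{d+j}))$ for all $j$ at once, and the unboundedness of the family produces infinitely many distinct $(m+1)$-tuples of fields.

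The concrete route would be polynomial parametrization. Pick $x(t), y(t) \in \Z[t]$ with $\deg y$ large, set $d(t) := x(t)^2 - y(t)^p$, and look for correctors $\beta_j(t), \alpha_j(t) \in \Z[t]$ with $\beta_0 = \alpha_0 = 0$ satisfying the identity
\[
\beta_j\bigl(2x + \beta_j\bigr) \;=\; (y + \alpha_j)^p - y^p + j \qquad (1 \le j \le m)
\]
in $\Z[t]$. Any such family, after specialization $t \mapsto t_0 \in \Z$ large and away from the finitely many congruence classes where $\gcd(2 x_j(t_0), y_j(t_0)) > 1$ (these bad classes are controllable by CRT because both arguments are polynomial in $t$), would yield the sought infinite family of $d = d(t_0)$.

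The chief obstacle is the existence of such polynomial identities. In the weaker squared-shift problem $d + j^2$ one can take $y_j = y$ and $x_j = x + j$ and exploit $(x+j)^2 - x^2 = 2xj + j^2$: choosing $x$ linear in a free parameter $t$ absorbs the term $2xj$, and the residual $j^2$ matches the shift exactly, which is essentially why the authors access the quadratic variant. For the linear shift $d + j$ demanded by the conjecture, matching $(y + \alpha_j)^p - y^p + j$ against $\beta_j(2x + \beta_j)$ requires the leading contribution $p y^{p-1} \alpha_j$ to be bounded independently of the growth of $y(t)$, which forces $\alpha_j \equiv 0$ and then demands the impossible polynomial identity $\beta_j(2x + \beta_j) = j$. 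Breaking this barrier would require a genuinely new ingredient — either a divisibility criterion that does not demand a perfect $p$-th power on the right, or non-polynomial simultaneous Diophantine machinery controlling all $m+1$ equations at once — and this is precisely the gap separating the methods of this paper from a proof of the full Iizuka conjecture.
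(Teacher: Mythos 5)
The statement you were asked about is stated in the paper as a \emph{Conjecture} (Iizuka's conjecture), and the paper does not prove it; it only establishes the weaker Theorem~\ref{thm:24}, in which the linear shifts $d+j$ are replaced by the squared shifts $d+j^{2}$. Your proposal, to its credit, does not actually claim to close this gap: you set up the natural strategy (represent each $d+j$ as $x_j^{2}-y_j^{p}$ and invoke Theorem~\ref{thm:22} simultaneously) and then correctly diagnose why it fails for linear shifts while succeeding for squared shifts. This diagnosis matches the paper. One small correction to your account of the squared-shift case: the paper does not ``absorb $2xj$ with $x$ linear in $t$''; it simply takes the base point $x=0$, i.e.\ $d=-y^{n}$ with $y=((m+1)!)^{nl}-1$, so that $d+j^{2}=j^{2}-y^{n}$ is literally of the form $x^{2}-y^{n}$ with $x=j$ and the \emph{same} $y$ for every $j$. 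The price of taking $x=0$ is that Theorem~\ref{thm:22} cannot be applied to the base field itself (the hypothesis $\gcd(2x,y)=1$ fails), which is why the paper imports Cohn's result (Proposition~\ref{thm:12}) to handle $n\mid h(d)$ separately; your sketch omits this point.

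As for the conjecture itself: your obstruction argument is essentially sound. Requiring $d+j=x_j^{2}-y_j^{p}$ for all $j$ with a common rapidly growing $y$ forces the differences $(x_j^{2}-x_0^{2})-(y_j^{p}-y_0^{p})=j$ to be tiny compared to $p\,y^{p-1}$, which rules out any nontrivial change in $y_j$ and reduces to $x_j^{2}-x_0^{2}=j$, impossible for all $j\le m$ once $m\ge 2$. So there is indeed no proof here, but that is the correct conclusion: the statement remains open, the paper proves only the $d+j^{2}$ variant, and your assessment of where the method breaks is accurate.
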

	
In~\cite{ref2}, Hoque generalized Iizuka's Conjecture by replacing odd prime $p$ by odd integer $n$. For an odd prime $p$, he obtained results about the $n$-divisibility of the class number of $\Q(\sqrt{p^{2}-l^{n}})$ under some module conditions. Based on this result, he constructed an infinite family of quadruples of imaginary quadratic fields
	\begin{equation*}
		\Q(\sqrt{d}),\ \Q(\sqrt{d+1}),\ \Q(\sqrt{d+4}),\ \Q(\sqrt{d+4p^{2}})
	\end{equation*}
with $d\in \Z$ whose class numbers are divisible by $n$.
	
In~\cite{ref9}, for $k$ a cubic-free positive integer such that $k\equiv 1 \pmod{9}$ and $\gcd(k,7\times 571)=1$, Chattopadhyay and  Muthukrishnan constructed an infinite family of  triples of imaginary quadratic fields $\Q(\sqrt{d})$, $\Q(\sqrt{d+1})$, $\Q(\sqrt{d+k^{2}})$ with $d\in \Z$ whose class number are divisible by $3$.
	
In this note, we study the class number of the imaginary quadratic field
  \[ L=L_{x,y,n}:=\Q(\sqrt{x^2-y^n}),\ x,y,z\in \Z_+,\ 2\nmid n,\ x^2<y^n. \] 
Our main result is 
\begin{theorem}  \label{thm:22} Fix $x$ and $n$. Suppose    $\gcd(2x,y)=1$ and $y>C$ where $C$ is a constant depending only on $x$ and $n$. Then  the class number of $L_{x,y,n}= \Q(\sqrt{x^{2}-y^{n}})$ is divisible by $n$. 
 \end{theorem}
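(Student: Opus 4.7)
The plan is to produce an ideal $\mathfrak{c}$ in $\cO_L$ whose $n$-th power is $(x-\sqrt{x^2-y^n})$ and then show, for $y$ large, that the class $[\mathfrak{c}]\in\mathrm{Cl}(L)$ has order exactly $n$. To begin, let $d_0<0$ be the squarefree part of $x^2-y^n$ and write $x^2-y^n=s^2d_0$, so $\sqrt{x^2-y^n}=s\sqrt{d_0}$ in $L$. The identity
\[ (x-s\sqrt{d_0})(x+s\sqrt{d_0})=(y)^n \]
holds in $\cO_L$. Using $\gcd(2x,y)=1$, the two principal ideals on the left are coprime: their sum contains $2x$ and their product $(y^n)$, hence contains $\gcd(2x,y^n)=1$. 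Matching prime-ideal exponents in the unique factorization of $(y)^n$ then shows $(x-s\sqrt{d_0})=\mathfrak{c}^n$ for some integral ideal $\mathfrak{c}$ of norm $y$. Thus $[\mathfrak{c}]^n=1$, and it suffices to show $[\mathfrak{c}]$ has order exactly $n$, which by elementary group theory amounts to showing $\mathfrak{c}^{n/p}$ is nonprincipal for each prime $p\mid n$.

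I would argue by contradiction: suppose $\mathfrak{c}^{n/p}=(\beta)$ for some $\beta\in\cO_L$, giving $\beta^p=u(x-s\sqrt{d_0})$ for a unit $u\in\cO_L^\times$. The Diophantine equations $y^n-x^2=|d_0|s^2$ with $|d_0|\in\{1,3\}$ (the values where $\cO_L^\times$ strictly contains $\{\pm 1\}$) have only finitely many solutions $y$ for fixed $x$ and $n\ge 3$, by Siegel's theorem on integral points on curves of positive genus; these are absorbed into $C$. Otherwise $\cO_L^\times=\{\pm 1\}$ and, since $p$ is odd, the unit folds into $\beta$ to give $\beta^p=\pm(x-s\sqrt{d_0})$. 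Let $\alpha=2\beta\in\Z[\sqrt{d_0}]$ and write $\alpha=a+b\sqrt{d_0}$. Expanding $\alpha^p=A+B\sqrt{d_0}$ and reading off the real part yields
\[ A=a\sum_{j=0}^{(p-1)/2}\binom{p}{2j}a^{p-1-2j}(b^2d_0)^j=\pm 2^p x, \]
so $a\mid 2^px$, forcing $1\le|a|\le 2^px$ (the case $a=0$ makes $A=0\ne\pm 2^px$). The norm identity $a^2+b^2|d_0|=4y^{n/p}$ lets me substitute $b^2d_0=a^2-4Y$ with $Y:=y^{n/p}$, rewriting $A$ as a polynomial in $a$ and $Y$ whose leading term in $Y$ is $(-1)^{(p-1)/2}p\cdot 2^{p-1}aY^{(p-1)/2}$.

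The main obstacle is then a routine but careful verification that this leading term really dominates. Since $|a|$ is bounded solely in terms of $x$ and $p$, all lower-order terms in $Y$ are bounded by a constant $C'(x,p)$ times $Y^{(p-3)/2}$. The triangle inequality applied to $A=\pm 2^px$ therefore yields
\[ p\cdot 2^{p-1}Y^{(p-1)/2}\le 2^px+C'(x,p)Y^{(p-3)/2}, \]
and for $Y$ beyond an explicit threshold depending only on $x$ and $p$ this forces $Y\le(4x/p)^{2/(p-1)}$; in particular $y$ is bounded by a constant depending only on $x$ and $p$. Taking the maximum over the finitely many primes $p\mid n$ and including the exceptional $y$ from the Siegel step produces the constant $C=C(x,n)$, so that for $y>C$ the class $[\mathfrak{c}]$ has order exactly $n$ and $n$ divides the class number of $L$.
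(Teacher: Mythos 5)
Your argument is correct, and although it opens exactly as the paper does --- factor $(x-t\sqrt{-d})(x+t\sqrt{-d})=(y)^n$, use $\gcd(2x,y)=1$ to make the two factors comaximal and extract an ideal $\mathfrak{c}$ with $\mathfrak{c}^n=(x-t\sqrt{-d})$, then reduce to showing $\mathfrak{c}^{n/p}$ is nonprincipal for each prime $p\mid n$ --- the way you reach the contradiction is genuinely different. The paper compares real parts of $(a+b\sqrt{-d})^s=\pm(x-t\sqrt{-d})$ and argues arithmetically: from $a\mid x$ and $\pm x\equiv a^s\pmod{d}$ it forces $x=a^s$ once the square-free part $d$ exceeds $x+x^n$, and then kills the leftover sum by a $q$-adic valuation argument at a prime $q\mid d$ with $q\nmid\binom{s}{2}$, which needs $d>n!$; each largeness condition on $d$ is bought by a separate appeal to Siegel's theorem through Lemma~\ref{lemma:21}, and the two maximal orders are treated as separate cases. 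You instead argue archimedeanly: the divisibility bound $1\le|a|\le 2^px$ together with the norm identity $a^2+|d_0|b^2=4y^{n/p}$ forces the real part of $\alpha^p$ to grow like $p\,2^{p-1}|a|\,y^{n(p-1)/(2p)}$, which cannot equal $\pm 2^px$ once $y$ is large. This invokes Siegel only once (to discard the finitely many $y$ with $|d_0|\in\{1,3\}$, exactly the paper's first application with $k=3$), handles both maximal orders uniformly via $\alpha=2\beta$, and makes no use of the size of the square-free part $d_0$ itself --- only of $4y^{n/p}-a^2$ being large. The trade-off is essentially nil, since both constants remain ineffective through the one unavoidable Siegel input. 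Two minor points to tidy: state explicitly that $p\ge 3$ (automatic since $n$ is odd), so the exponent $(p-1)/2$ is positive and the leading term genuinely grows; and the sentence deducing $Y\le(4x/p)^{2/(p-1)}$ should simply say that the displayed inequality bounds $Y$, hence $y$, in terms of $x$ and $p$ alone.
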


Based on this result, we have a weaker version of Iizuka's Conjecture: 
\begin{theorem}  \label{thm:24} 
For any odd integer $n\geq3$ and any positive integer $m$, there is an infinite family of $m+1$ successive imaginary quadratic fields $\Q(\sqrt{d})$, $\Q(\sqrt{d+1^{2}})$, $\cdots$, $\Q(\sqrt{d+m^{2}})$ with $d\in \Z$ whose class numbers are all divisible by $n$.
\end{theorem}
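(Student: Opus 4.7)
The strategy is to reduce Theorem~\ref{thm:24} to Theorem~\ref{thm:22} for $1\le i\le m$, together with a classical $n$-divisibility input to cover the single case $i=0$. Concretely, I would set $d=-y^n$ for a carefully chosen positive odd integer $y$. With this choice $d+i^2 = i^2 - y^n$ for every $i$, and for $i\in\{1,\ldots,m\}$ this is precisely an integer of the form $x^2-y^n$ to which Theorem~\ref{thm:22} applies with $x=i$. The coprimality hypothesis $\gcd(2i,y)=1$ follows from asking $y$ to be odd and coprime to $\operatorname{lcm}(1,\ldots,m)$, and the size hypothesis $y>C(i,n)$ holds as soon as $y>\max_{1\le i\le m}C(i,n)$. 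Thus Theorem~\ref{thm:22} delivers $n\mid h(\Q(\sqrt{d+i^2}))$ for each $i=1,\ldots,m$ in a single stroke.

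The case $i=0$ is not directly covered, since the hypothesis $\gcd(2x,y)=1$ in Theorem~\ref{thm:22} fails at $x=0$. Here I use that $n$ is odd, so $y^{n-1} = (y^{(n-1)/2})^2$ is a perfect square and consequently
\[
\Q(\sqrt d) = \Q(\sqrt{-y^n}) = \Q(\sqrt{-y}).
\]
So it suffices to arrange that $n\mid h(\Q(\sqrt{-y}))$. For this I would invoke the classical theorem (in the spirit of \cite{ref1,ref5,ref8}) that for any $n\ge 1$ there exist infinitely many positive integers $y$ with $n\mid h(\Q(\sqrt{-y}))$, together with the standard refinement that such $y$ can be produced inside any prescribed residue class modulo a fixed modulus; in particular, inside the residue class cut out by the parity and coprimality conditions we already need.

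Combining the two inputs, I select an infinite sequence of positive integers $y$ satisfying simultaneously: (a) $y$ odd, (b) $\gcd(y,\operatorname{lcm}(1,\ldots,m))=1$, (c) $y>\max_{1\le i\le m} C(i,n)$, and (d) $n\mid h(\Q(\sqrt{-y}))$. For each such $y$, setting $d=-y^n$ makes all $d+i^2$ negative (hence each $\Q(\sqrt{d+i^2})$ imaginary quadratic), and the discussion above ensures $n\mid h(\Q(\sqrt{d+i^2}))$ for every $i\in\{0,1,\ldots,m\}$. Choosing the $y$ with pairwise distinct squarefree kernels produces pairwise distinct tuples, yielding the required infinite family.

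The main obstacle is the compatibility step between (b) and (d): one needs that the classical $n$-divisibility result can be enforced inside the arithmetic progression required by the coprimality constraint. This is standard but nontrivial, and I would settle it either by extracting from an explicit Nagell--Ankeny--Chowla--Yamamoto construction the freedom to prescribe the residue class of $y$, or by building an auxiliary representation $-y = X_0^2 - Y_0^n$ and running the same ideal-theoretic factorization that underlies Theorem~\ref{thm:22}, with $X_0\neq 0$ chosen to respect the required congruences.
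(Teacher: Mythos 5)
Your reduction for $1\le i\le m$ is exactly the paper's: set $d=-y^n$, note $d+i^2=i^2-y^n$, and apply Theorem~\ref{thm:22} with $x=i$ once $y$ is odd, coprime to $1,\dots,m$, and larger than $\max_{1\le i\le m}C(i,n)$. The gap is the case $i=0$, which you correctly isolate but do not settle. You reduce it to finding infinitely many $y$ satisfying the parity/coprimality constraints together with $n\mid h(\Q(\sqrt{-y}))$, and then appeal to ``the standard refinement that such $y$ can be produced inside any prescribed residue class.'' No such refinement is quoted or proved, and it is not an off-the-shelf fact: the classical Nagell--Ankeny--Chowla--Yamamoto constructions produce $y$ of special shapes, and forcing $y$ into the residue class $y\equiv -1\pmod{\operatorname{lcm}(2,1,\dots,m)}$ while retaining the divisibility is precisely the content of the missing step --- you yourself flag it as the ``main obstacle'' and offer only two unexecuted strategies. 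Your second fallback (an auxiliary representation $-y=X_0^2-Y_0^n$ fed back into Theorem~\ref{thm:22}) would moreover require $\gcd(2X_0,Y_0)=1$ and $Y_0$ large, constraints you have not reconciled with the congruences already imposed on $y$.

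The paper closes this with a single explicit choice: $y=((m+1)!)^{nl}-1$. This $y$ is automatically $\equiv -1$ modulo $2$ and modulo every prime $\le m+1$, so $\gcd(2x,y)=1$ holds for free for all $1\le x\le m$; it exceeds $N=\max_x C(x,n)$ once $l$ is large; and $-y=1-\bigl(((m+1)!)^{l}\bigr)^{n}$ has exactly the shape $1-V^n$ covered by the already-stated Proposition~\ref{thm:12} of Cohn, giving $n\mid h(d)$ since $\Q(\sqrt d)=\Q(\sqrt{-y})$. Varying $l$ yields the infinite family. So your architecture matches the paper's, but the argument is incomplete until you either prove the residue-class refinement you invoke or, more simply, exhibit a concrete sequence of $y$ such as this one.
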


For an integer $d$ which is not a square, we denote by $h(d)$  the class number of the quadratic field $\Q(\sqrt{d})$.
We need the following results:
\begin{proposition}[Cohn~\cite{ref6}] \label{thm:12}
Assume $n\geqslant 3$ and $V\geqslant 3$ are odd integers. Then $n\mid h(1-V^{n})$ except for the case $(V,n)=(3,5)$.
\end{proposition}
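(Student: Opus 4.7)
The plan is to exhibit an ideal class of order exactly $n$ in the class group of $K = \mathbb{Q}(\sqrt{1-V^n})$. Writing $1 - V^n = c^2 D$ with $D$ squarefree and $D < 0$, I would consider $\alpha = 1 - c\sqrt{D} \in \mathcal{O}_K$, whose norm is $V^n$. Every odd prime $p$ dividing $V$ is coprime to $1 - V^n$ and makes it a nonzero square modulo $p$, so $p$ splits in $\mathcal{O}_K$ as $(p) = \mathfrak{p}\bar{\mathfrak{p}}$. Since $\alpha + \bar\alpha = 2$ and $V$ is odd, $\gcd(\alpha,\bar\alpha)$ is coprime to $V$, which forces each such $\mathfrak{p}$ to appear entirely in one of $(\alpha)$ or $(\bar\alpha)$. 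Choosing the primes that occur in $(\alpha)$ and setting $\mathfrak{a} = \prod_{p \mid V}\mathfrak{p}^{v_p(V)}$, one obtains $\mathfrak{a}^n = (\alpha)$, so $[\mathfrak{a}]^n = 1$ in the class group.

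The heart of the proof is to show that the order of $[\mathfrak{a}]$ is not a proper divisor of $n$. Suppose instead $\mathfrak{a}^m = (\beta)$ with $m \mid n$ and $m < n$. Then $\beta^{n/m} = \pm \alpha$, since the unit group of $\mathcal{O}_K$ is $\{\pm 1\}$ in the relevant range (the exceptional fields with $D = -1$ or $-3$ can be handled separately by Catalan-type arguments on $V^n - 1 = c^2$ or $3c^2$). Writing $\beta = u + v\sqrt{D}$ in the appropriate half-integer basis, the norm relation gives $u^2 + |D|v^2 = V^m$, and comparing $\sqrt{D}$-coefficients on both sides of $\beta^{n/m} = \pm(1 - c\sqrt{D})$ produces a Lucas-type identity of the form
\[ v \cdot U_{n/m}\bigl(2u,\, V^m\bigr) = \mp c, \]
where $U_r(P,Q)$ denotes the Lucas sequence associated with $X^2 - PX + Q$. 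Combining the a priori bounds $|u| \leq V^{m/2}$ and $\sqrt{|D|}\,|v| \leq V^{m/2}$ with the classical growth of the Lucas factor (of order $V^{(n-m)/2}$) forces $c$ to be large; but $c^2|D| = V^n - 1$ then forces $|D|$ to be small, and these two squeezes on $|D|$ are incompatible outside a small explicit list of candidate triples.

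The main obstacle will be the clean elimination of that remaining finite list, the one genuine failure being $(V,n) = (3,5)$. There $1 - 3^5 = -2 \cdot 11^2$ gives $D = -2$ and $c = 11$, and one computes $(1 + \sqrt{-2})^5 = 1 - 11\sqrt{-2} = \alpha$, so $\mathfrak{a} = (1 + \sqrt{-2})$ is principal and the order of $[\mathfrak{a}]$ collapses to $1$ (consistent with $h(-2) = 1$). Ruling out all other small-parameter coincidences of this type can be done either by exploiting the cyclotomic factorization $V^n - 1 = \prod_{d \mid n} \Phi_d(V)$ to restrict the squareful part $c^2$, or by invoking the Bilu--Hanrot--Voutier theorem on primitive prime divisors of Lucas sequences to control when the identity above is solvable.
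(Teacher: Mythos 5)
First, a point of orientation: the paper does not prove this statement at all --- it is imported verbatim from Cohn's paper on $x^n = Dy^2+1$ and used as a black box, so there is no in-paper proof to match. The closest analogue is the paper's own proof of Theorem~\ref{thm:22}, which runs the same ideal-factorization scheme you propose (write $(x - t\sqrt{-d}) = \alpha^n$ from coprimality of the two conjugate factors, then rule out $[\alpha]$ having order a proper divisor of $n$). Your setup is sound: the coprimality of $(\alpha)$ and $(\bar\alpha)$, the conclusion $[\mathfrak{a}]^n = 1$, and your computation of the exceptional case $(V,n)=(3,5)$, where $(1+\sqrt{-2})^5 = 1 - 11\sqrt{-2}$ makes the class collapse, are all correct.

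The genuine gap is in the descent step, and it is not a matter of missing details --- the argument you propose would fail. From $\mathfrak{a}^m = (\beta)$ you extract only the norm equation $u^2 + |D|v^2 = V^m$ and the $\sqrt{D}$-coefficient identity $v\,U_{n/m}(2u, V^m) = \mp c$, and you hope to get a contradiction by squeezing sizes. But for a quadratic with complex conjugate roots of modulus $V^{m/2}$ one has only the upper bound $|v\,U_r| \leq V^{rm/2}/\sqrt{|D|} = V^{n/2}/\sqrt{|D|}$, which is (up to $O(1)$) exactly the size of $c = \sqrt{(V^n-1)/|D|}$; there is no useful lower bound on $|U_r|$ because $\sin(r\theta)$ can be small, and indeed the ``solution'' $\beta = \alpha$, $m = n$ saturates your inequalities. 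So nothing is being squeezed. The equation you never use is the rational-coordinate one, $\operatorname{Re}(\beta^{n/m}) = \pm 1$, i.e.
\[ (2u)\sum_{j=0}^{\frac{r-1}{2}}\binom{r}{2j}(2u)^{r-2j-1}(v^2D)^{j} = \pm 2^{r}, \]
and this is where all the Diophantine content lives: it forces $2u \mid 2^r$, reduces modulo $D$ to pin down $u$, and leaves a binomial sum whose $q$-adic valuation at a suitable prime $q \mid D$ gives the contradiction --- precisely the mechanism in the paper's proof of Theorem~\ref{thm:22} (equations \eqref{eq:2}--\eqref{eq:3}) and in Cohn's argument. Note also that the paper only gets this to work for $y$ \emph{large} (so that $d$ exceeds explicit thresholds); Cohn's result holds for all $V \geq 3$ with a single exception, which requires genuinely harder case analysis (this is where Bilu--Hanrot--Voutier or Cohn's hands-on lemmas actually enter), not the soft ``small explicit list'' you defer to. As written, the central step of your proof does not close.
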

	\begin{proposition}[Hoque-Chakraborty~\cite{ref10}] \label{thm:13} Let $m>1$ and $p$ be odd integers, and $n$ be any positive integer. Let $d$ be the square-free part of $-(3^{m}p^{2n}+r)$ with $r\in \{-2, 4\}$. Then $3\mid h(d)$.
	\end{proposition}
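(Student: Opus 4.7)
Set $D := 3^{m}p^{2n}+r$, so that $L := \Q(\sqrt{d}) = \Q(\sqrt{-D})$. Since $m>1$ is odd, set $k := (m-1)/2$ and $P := 3^{k}p^{n}$, giving the useful presentation
\[ D \;=\; 3P^{2}+r, \qquad r \in \{-2,4\}. \]
The plan is to follow the Cohn-style strategy of Proposition~\ref{thm:12} specialised to $n=3$: I would exhibit an explicit principal ideal $(\alpha) \subset \cO_L$ that factors in $\cO_L$ as $(\alpha) = \mathfrak{a}^{3}\cdot\mathfrak{b}$, with $\mathfrak{a}$ non-principal and $\mathfrak{b}$ of trivial class in the ideal class group $\mathrm{Cl}(L)$. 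Then $[\mathfrak{a}]$ has order $3$, yielding $3 \mid h(d)$.

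The starting point for the construction of $\alpha$ is the algebraic identity
\[ (\sqrt{-D} - P\sqrt{-3})(\sqrt{-D} + P\sqrt{-3}) \;=\; -D + 3P^{2} \;=\; -r, \]
holding in the biquadratic field $F := L(\sqrt{-3})$. This factors the very small integer $-r \in \{-4,2\}$ as a product of two conjugate (under $\mathrm{Gal}(F/L)$) elements of $\cO_F$. Since $\Q(\sqrt{-3})$ has class number one and $2$ is inert there, the factor structure of $-r$ in $\Z[\omega]$ is completely transparent; pulling this structure back through the relative norm $N_{F/L}$ and combining with the explicit shape of $\cO_L$ (which is $\Z[(1+\sqrt{-D})/2]$ when $r=4$, giving $-D \equiv 1\pmod 4$, and $\Z[\sqrt{-D}]$ when $r=-2$, giving $-D \equiv 3 \pmod 4$) is expected to yield an element $\alpha \in \cO_L$ of small height whose norm has a nontrivial cube factor $M^{3}$ coprime to $d$, with the leftover being a product of primes above $2$ in $L$.

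From $(\alpha)(\bar\alpha) = (N(\alpha))$ and $\gcd(\alpha,\bar\alpha) \mid (\alpha+\bar\alpha)$ (a small rational integer), one then deduces the factorisation $(\alpha) = \mathfrak{a}^{3}\mathfrak{b}$ with $[\mathfrak{b}] = 1$, since the primes above $2$ in $L$ generate a $2$-power subgroup of $\mathrm{Cl}(L)$ whose order is coprime to $3$. Thus $[\mathfrak{a}]^{3} = 1$. Non-principality of $\mathfrak{a}$ is then verified by a Minkowski-type size argument: any generator $\gamma \in \cO_L$ would satisfy $u^{2}+Dv^{2} = 4M$ in integers, which admits no solution once $D > 4M$, automatic for $m \geq 3$ since $D \geq 25$ grows like $3^{m}$ while $M$ stays at Minkowski-bounded size. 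Any remaining small cases are eliminated by congruence conditions modulo $p$.

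The main obstacle is the explicit descent in the second step: translating the biquadratic identity into an $L$-rational element $\alpha$ with the correct cube-factor in its norm requires careful $2$-adic analysis depending on $D \pmod 8$ and on $P$ modulo small integers. The precise values $r \in \{-2,4\}$ are highly non-generic and are chosen exactly so that $-r$ is a unit times a square in $\Z[\omega]$, which is what makes the $2$-primary leftover class-trivial in $\mathrm{Cl}(L)$; this case-by-case descent is the technical heart of the argument in \cite{ref10}, while the ideal-theoretic conclusion and the non-principality check are then routine.
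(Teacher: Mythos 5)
First, a structural point: the paper does not prove Proposition~\ref{thm:13} at all --- it is quoted from Hoque--Chakraborty~\cite{ref10} and used as a black box, so there is no internal proof to compare yours against. Your submission therefore has to stand on its own, and as written it does not: it is a strategy outline whose decisive step is explicitly deferred (``is expected to yield an element $\alpha$\dots'', ``this case-by-case descent is the technical heart of the argument in \cite{ref10}''). The element $\alpha$, the cube $M^{3}$ in its norm, and the ideal $\mathfrak{a}$ are never actually produced, so nothing is proved. Moreover, the identity you start from, $(\sqrt{-D}-P\sqrt{-3})(\sqrt{-D}+P\sqrt{-3})=-r$ with $-r\in\{2,-4\}$, has no visible source of a cube: any cube must ultimately come from $3P^{2}=3^{m}p^{2n}$, and since $m$ is merely odd this quantity is not a cube, so it is far from clear that the envisioned descent can produce the required factorisation $(\alpha)=\mathfrak{a}^{3}\mathfrak{b}$.

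Two of your bridging assertions are also false or unjustified. (i) ``The primes above $2$ in $L$ generate a $2$-power subgroup of $\mathrm{Cl}(L)$'': this holds for a ramified prime, whose class has order dividing $2$, but for $r=4$ one checks $d\equiv -D\equiv 1\pmod 8$, so $2$ \emph{splits} in $L$, and a split prime above $2$ can have arbitrary order in the class group, including order divisible by $3$; hence $[\mathfrak{b}]=1$ does not follow. (ii) ``$-r$ is a unit times a square in $\Z[\omega]$'': for $r=-2$ one has $-r=2$, which is inert and squarefree in $\Z[\omega]$, hence not a unit times a square. Finally, the non-principality check is incomplete: if $D>4M$ then $u^{2}+Dv^{2}=4M$ forces $v=0$ and $u^{2}=4M$, so you must additionally rule out $M$ being a perfect square, which cannot be done without ever specifying $M$. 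In short, you have identified the standard framework (exhibit an ideal class of order $3$), but the proposal contains no executed proof, and several of the claims meant to glue the steps together are incorrect as stated.
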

	

\begin{lemma} \label{lemma:21}
For $n>1$ an odd integer, $x$ and $y$ positive integers such that $x^{2}<y^{n}$, write $y^{n}-x^{2}=dt^{2}$ with $d$ square-free. Then for any $k>0$, there exists a constant $K(x,n,k)>0$ such that $d>k$ if $y>K(x,n,k)$.
\end{lemma}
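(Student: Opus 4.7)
The plan is to argue by contradiction, reducing the statement to a finiteness question for integral points on a family of hyperelliptic curves, and then invoking Siegel's theorem. Suppose the conclusion fails. Then for some $k>0$ there is an infinite sequence of positive integers $y$ whose associated squarefree integer $d=d(y)$ (the squarefree part of $y^{n}-x^{2}$) satisfies $d\leq k$. Since $d$ ranges over the finite set $\{1,2,\ldots,k\}$, the pigeonhole principle yields a single $d_{0}\in\{1,\ldots,k\}$ together with infinitely many pairs of positive integers $(y,t)$ solving
\[ y^{n}-d_{0}\,t^{2}=x^{2}. \]

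I would next check that these pairs are integral points on an irreducible curve of positive genus. Since $n\geq 3$ is odd and $x\geq 1$, the polynomial $Y^{n}-x^{2}\in\Z[Y]$ has $n$ pairwise distinct roots in $\overline{\Q}$, namely the $n$ distinct $n$-th roots of $x^{2}$. Therefore the affine curve $C_{d_{0}}\colon d_{0}T^{2}=Y^{n}-x^{2}$ is smooth, and its smooth projective completion is a hyperelliptic curve of genus $(n-1)/2\geq 1$ (with a single ramified point at infinity, as is standard when $\deg(Y^{n}-x^{2})$ is odd). By Siegel's theorem on integral points on curves of positive genus, $C_{d_{0}}(\Z)$ is finite, contradicting the existence of infinitely many $(y,t)$ above. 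This contradiction yields the claimed constant $K(x,n,k)$.

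The main obstacle is the notorious ineffectivity of Siegel's theorem: the argument produces $K(x,n,k)$ only as an abstract constant, not an explicit one. Since the lemma asserts only existence, this is acceptable for the present application. For an explicit bound one could in principle appeal to Baker's theory of linear forms in logarithms applied to the hyperelliptic equation $d_{0}T^{2}=Y^{n}-x^{2}$ (which collapses to a Mordell-type elliptic equation when $n=3$), but this effective refinement is not needed and would require considerably more bookkeeping.
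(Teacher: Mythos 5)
Your proof is correct and follows essentially the same route as the paper: both rest on Siegel's theorem applied to the finitely many curves $DT^{2}=Y^{n}-x^{2}$ with $1\leq D\leq k$, the paper arguing directly by collecting the (finitely many) $y$-coordinates of all their integral points and taking the maximum, while you phrase it as a contradiction via pigeonhole. Your verification that these curves have positive genus (distinctness of the roots of $Y^{n}-x^{2}$) is a detail the paper merely asserts.
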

\begin{proof}
For any positive integer $D$, the curve $Y_{D}: DT^{2}+x^{2}=Y^{n}$ is an algebraic curve of genus $> 0$. By Siegel's Theorem, $Y_{D}$ has only finite many integral points. Consider the set 
\begin{equation*}
A_{k}=\{y_{P}\mid P:(t_{P},y_{P}) \in Y_{D}(\Z)\quad \text{for\ some}\ 1\leqslant D \leqslant k\}.
\end{equation*}
Then $A_{k}$ is a finite subset of $\Z$. Take $K(x,n,k)$ to be the largest integer in $A_{k}$. We have $d>k$ if $y>K(x,n,k)$ by construction.
\end{proof}

\begin{proof}[Proof of Theorem~\ref{thm:22}] Let $d$ be the square-free part of $y^{n}-x^{2}$ and write $x^{2}-y^{n}=-dt^{2}$. We have $(x-t\sqrt{-d})(x+t\sqrt{-d})=y^{n}$. Since $\gcd(2x,y)=1$, we have $(x-t\sqrt{-d})$ is coprime with $(x+t\sqrt{-d})$. Thus $(x-t\sqrt{-d})$ and $(x+t\sqrt{-d})$ are both  $n$-th power for some integral ideals. Write $(x-t\sqrt{-d})=\alpha^{n}$ where $\alpha$ is an integral ideal.  Suppose $m$ is the order of $[\alpha]$  in the class group of $L=L_{x,y,n}$. Then $n=ms$.  To show $n\mid h(x^2-y^n)$, it suffice to show that $[\alpha]$ is an element of order $n$, equivalently $s=1$. We show this is the case when $y$ is  large enough.
		
		\bigskip
		
\noindent (I) If $x^{2}-y^{n}\equiv 2,3 \pmod{4}$, then $\cO_{L}=\Z[\sqrt{-d}]$. Denote $\alpha^{m}=(a+b\sqrt{-d})$, thus we have
		\begin{equation} \label{eq:1}
			(a+b\sqrt{-d})^{s}=(x-t\sqrt{-d}).
		\end{equation}
We shall use Lemma~\ref{lemma:21} often. First assume
 \[ y>K_1= K(x,n,3)\ \Longrightarrow\ d>3. \]  
Then the units of $\cO_{L}$ is $\{\pm 1\}$. Assume $a>0$ and compare the real parts of both sides of \eqref{eq:1}, we get
		\begin{equation}  \label{eq:2}
			\pm x=a\sum_{j=0}^{\frac{s-1}{2}}\binom{s}{2j}a^{s-2j-1}b^{2j}(-d)^{j}.
		\end{equation}	  
Thus $\pm x \equiv a^{s} \mod d$ and $a\mid x$. Assume 
 \[ y>K_2= K(x,n,x+x^n)\ \Longrightarrow\ d>x+x^n> x+a^s. \]   
Then \eqref{eq:2} implies that  $x=a^{s}$ and that
		\begin{equation}  \label{eq:3}
			\sum_{j=1}^{\frac{s-1}{2}}\binom{s}{2j}a^{s-2j}b^{2j}(-d)^{j}=0
		\end{equation}
if $s>1$.  Note that $\gcd(d,x)=\gcd(d,a)=1$, hence  $d\mid \binom{s}{2}b^{2}$. Assume 
\[ y>K_3= K(x,n,n!)\ \Longrightarrow\ d>n!. \]
Then there exists a prime factor $q$ of $d$ prime to $\binom{s}{2}$. Thus we have $q\mid b$. Compute the $q$-adic values of terms in RHS of \eqref{eq:3}, the $j=1$ term is the smallest, thus \eqref{eq:3} can not hold.   Hence we have $s=1$ if
 \[ y>C=\max\{K(x,n,3), K(x,n,x+x^n), K(x,n, n!)\}.\]

		\bigskip
\noindent (II) If $x^{2}-y^{n}\equiv 1 \pmod{4}$, then $\cO_{L}=\Z[\frac{1+\sqrt{-d}}{2}]$. Denote $\alpha^{m}=(a+b\frac{1+\sqrt{-d}}{2})$, we have
		\begin{equation}
			\left(a+b\frac{1+\sqrt{-d}}{2}\right)^s=(x-t\sqrt{-d}).
		\end{equation}
		Assume that $2a+b\geq 0$, take real part of $(4)$ we have 
		\begin{equation}
			\pm 2^{s}x=(2a+b)\sum_{j=0}^{\frac{s-1}{2}}\binom{s}{2j}(2a+b)^{s-2j-1}b^{2j}(-d)^{j}.
		\end{equation}
By the same argument as in (I), if 
 \[ y>C=\max\{K(x,n, 2^n x+(2^n x)^n), K(x,n.n!)\},\]		
then $s=1$.		
\end{proof}
\begin{remark}
Apply the same strategy, supposed that $\gcd(x,y)=1$ and $x^{2}<4y^{n}$, one can show that $n\mid h(x^2-4y^n)$  for fixed $x$ and $n$ if $y>C$ for some constant $C$ depending only on $x$ and $n$.
\end{remark}

\begin{proof}[Proof of Theorem~\ref{thm:24}]
By Theorem~\ref{thm:22}, for $1\leqslant x \leqslant m$, there exist $C(x,n)>0$ such that $n\mid h(x^2-y^n)$ if $y>C(x,n)$ and $\gcd(2x,y)=1$. Take $N=\max\{C(x,n)\mid 1\leqslant x\leqslant m\}$. Then  $n\mid h(x^{2}-y^{n})$  if $y>N$ and $\gcd(2x,y)=1$ for $1\leqslant x\leqslant m$. Take 
  \[ y=((m+1)!)^{nl}-1,\quad d=\Bigl(1-\bigl((m+1)!\bigr)^{nl}\Bigr)^{n}=(-y)^n \] 
with $l$ large enough such that  $y>N$. Then $\gcd(2x,y)=1$ for $1\leq x\leq m$. By Proposition~\ref{thm:12}, then
 \[ n\mid h(-d);\]
by Theorem~\ref{thm:22}, then 
 \[ n\mid h(d+x^2)=h(x^2-y^n)\ \text{for}\ 1\leq x\leq m. \]
Thus we get an infinite family of $m+1$ successive imaginary quadratic fields $\Q(\sqrt{d})$, $\Q(\sqrt{d+1^{2}})$, $\cdots$, $\Q(\sqrt{d+m^{2}})$ with $d\in \Z$ whose class numbers are all divisible by $n$.
\end{proof}

Theorem~\ref{thm:22} also has the following corollaries:
\begin{corollary}  \label{thm:25} 
Let $n>1$ and $m$ be odd integers. Then there exist infinite pairs of quadratic fields $\Q(\sqrt{d})$ and $\Q(\sqrt{d+m})$ whose class numbers are both divisible by $n$.
\end{corollary}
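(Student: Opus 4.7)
The plan is to realize the odd integer $m$ as a difference of two squares and then invoke Theorem~\ref{thm:22} twice, with the same parameter $y$ but with two different values of $x$. Since $m$ is odd, write
\[ m = x_2^2 - x_1^2, \qquad x_1 = (m-1)/2,\ \ x_2 = (m+1)/2; \]
these are consecutive non-negative integers, hence coprime. For a positive odd integer $y$, setting
\[ d := x_1^2 - y^n \]
gives $d + m = x_2^2 - y^n$, placing both $\Q(\sqrt{d})$ and $\Q(\sqrt{d+m})$ in the family $L_{x,y,n}$ of Theorem~\ref{thm:22}.

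In the main case $m \geq 3$, both $x_1, x_2 \geq 1$, and Theorem~\ref{thm:22} supplies constants $C(x_1, n)$ and $C(x_2, n)$. I would then restrict $y$ to be an odd integer, coprime to $x_1 x_2$, and larger than $\max\{C(x_1, n), C(x_2, n)\}$. Such a $y$ automatically satisfies $\gcd(2x_i, y) = 1$ for both $i = 1, 2$, so Theorem~\ref{thm:22} gives $n \mid h(d) = h(x_1^2 - y^n)$ and $n \mid h(d+m) = h(x_2^2 - y^n)$ by two immediate applications.

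The edge case $m = 1$, in which $x_1 = 0$ falls outside the positivity hypothesis on $x$ in Theorem~\ref{thm:22}, is already covered by Theorem~\ref{thm:24} applied with $m = 1$: the successive family $\Q(\sqrt{d})$, $\Q(\sqrt{d+1})$ constructed there is exactly an infinite family of the required pairs.

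The only remaining point, which I expect to be the main (if mild) obstacle, is to confirm that different admissible values of $y$ actually produce infinitely many distinct pairs of fields, rather than repeating the same pair under various parametrizations. This follows from Lemma~\ref{lemma:21}: as $y$ ranges over odd integers coprime to $x_1 x_2$ tending to infinity, the square-free part of $y^n - x_1^2$ tends to infinity, so the quadratic field $\Q(\sqrt{d})$, and hence the pair of fields, takes infinitely many distinct values.
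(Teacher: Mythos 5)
Your proposal is correct and takes essentially the same route as the paper, which also writes $m=2k-1=k^{2}-(k-1)^{2}$ and applies Theorem~\ref{thm:22} twice to $d=(k-1)^{2}-y^{n}$ and $d+m=k^{2}-y^{n}$, choosing the specific value $y=(k!)^{l}-1$ so that the coprimality hypotheses hold automatically. Your additional attention to the $m=1$ edge case and to the distinctness of the resulting fields (via Lemma~\ref{lemma:21}) addresses points the paper leaves implicit.
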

\begin{proof}
Denote $m=2k-1=k^2-(k-1)^{2}$, and take
 \[ d=(k-1)^{2}+\bigl(1-(k!)^{l}\bigr)^{n}.\]
By Theorem~\ref{thm:22}, we have $n\mid h(d)$ and $n\mid h(d+m)$ if  $l$ is large enough.
\end{proof}
\begin{remark} 
Note that one can replace $m$ by any integer of form $2^{2t}k$ with $k$ odd. This partly recovers a result of Xie-Chao in  \cite{ref7}, where $m$ can be taken as any integer using Yamamoto's construction~\cite{ref8}.
\end{remark}

\begin{corollary}
There exist infinite triples of quadratic fields $\Q(\sqrt{d})$, $\Q(\sqrt{d+1})$, $\Q(\sqrt{d+3})$ with $d\in \Z$ whose class number are all divisible by $3$.
\end{corollary}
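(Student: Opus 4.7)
The plan is to align the three shifts $d$, $d+1$, $d+3$ with the three divisibility results already at our disposal, namely Cohn's Proposition~\ref{thm:12}, Hoque--Chakraborty's Proposition~\ref{thm:13}, and Theorem~\ref{thm:22}. The key algebraic observation is the identity $3 = 2^{2}-1^{2}$: if we take $d$ of the form $1 - y^{3}$, then $d+3 = 2^{2} - y^{3}$ falls under Theorem~\ref{thm:22} with $x = 2$ and $n = 3$, while $d$ itself is exactly the content of Cohn's proposition; the middle shift $d+1 = 2 - y^{3}$ is what we then have to fit into Proposition~\ref{thm:13}.

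Concretely, I would take $y = 3^{k}$ for a positive integer $k$ (to be chosen large) and set $d = 1 - 3^{3k}$. Cohn's Proposition~\ref{thm:12} applies with $V = 3^{k} \geq 3$ odd and $n = 3$, with $(V,n) \neq (3,5)$, giving $3 \mid h(d)$. For the outer shift, $d+3 = 2^{2} - (3^{k})^{3}$ satisfies $\gcd(2\cdot 2, 3^{k}) = 1$, and for $k$ large enough the bound $3^{k} > C(2,3)$ coming from Theorem~\ref{thm:22} is met, so $3 \mid h(d+3)$. For the middle shift, $d+1 = -(3^{3k} - 2)$ is of the prescribed form $-(3^{m} p^{2n} + r)$ of Proposition~\ref{thm:13} with $m = 3k$, $p = 1$, $n = 1$, $r = -2$; since $m = 3k > 1$ already for $k \geq 1$, we obtain $3 \mid h(d+1)$.

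As $k$ ranges over the positive integers with $3^{k} > C(2,3)$, the integer $d = 1 - 3^{3k}$ tends to $-\infty$, so the resulting triples of imaginary quadratic fields are genuinely distinct and form an infinite family. I do not expect a real obstacle here: all the analytic content has been extracted into Theorem~\ref{thm:22} and the two cited propositions, and the only creative step is choosing a one-parameter family $y = 3^{k}$ on which all three sets of hypotheses can be verified simultaneously. The matching with Proposition~\ref{thm:13} is what drives the choice: the constraint $y^{3} = 3^{m}p^{2n}$ essentially forces $y$ to be a power of $3$ (after taking $p = 1$), so the family $y = 3^{k}$ is the most economical way to make all three propositions line up.
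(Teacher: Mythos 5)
Your overall architecture is the same as the paper's: set $d=1-y^{3}$ so that $d=1^{2}-y^{3}$ and $d+3=2^{2}-y^{3}$ are covered by Proposition~\ref{thm:12} / Theorem~\ref{thm:22}, and feed $d+1=-(y^{3}-2)$ into Proposition~\ref{thm:13}. The gap is in that last step: you specialize to $y=3^{k}$ and invoke Proposition~\ref{thm:13} with $p=1$. Although the proposition as transcribed only says ``$m>1$ and $p$ be odd integers,'' the value $p=1$ cannot be admissible, because with $p=1$, $n=1$, $r=-2$ the statement would assert $3\mid h(-7)=1$ (take $m=2$), and within your own family ($m=3k$) the case $k=1$ gives $-(3^{3}-2)=-25$ with square-free part $-1$ and $h(-1)=1$, while $k=2$ gives $-727$ with $h(-727)=13$. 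So the hypothesis must be read as requiring a nontrivial odd $p$ (the paper's own proof takes $p>3$ prime), and your claim $3\mid h(2-3^{3k})$ is left unsupported --- there is no reason it should hold for all large $k$.

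The fix is precisely the extra parameter you dismissed at the end as unnecessary: take $y=3^{k}p^{2t}$ with $p>3$ an odd prime, i.e.\ $d=1-3^{3k}p^{6t}$, which is what the paper does. Then $d+1=-(3^{3k}p^{6t}-2)=-(3^{m}p^{2n}+r)$ with $m=3k>1$, $n=3t$, $r=-2$ and a genuine odd $p>1$, so Proposition~\ref{thm:13} applies; $y$ is still odd and coprime to $2$ and to $4$, so Theorem~\ref{thm:22} (with $x=1$ and $x=2$) gives $3\mid h(d)$ and $3\mid h(d+3)$ once $k$ and $t$ are large enough. Your use of Cohn's Proposition~\ref{thm:12} for $d$ instead of Theorem~\ref{thm:22} with $x=1$ is an immaterial difference; either works for odd $y\geq 3$ with $(y,3)\neq(3,5)$.
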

\begin{proof}
Let $p>3$  be an odd prime and take $d=1-3^{3k}p^{6t}=1-(3^{k}p^{2t})^{3}$. According to Proposition~\ref{thm:13}, the class number of $\Q(\sqrt{d+1})$ is divisible by $3$. Since $\gcd(2,3p)=1$, we have $3\mid h(d)$ and $3\mid h(d+3)$ if  $k$ and $t$ are large enough by Theorem~\ref{thm:22}. This completes the proof.  
\end{proof}

\end{document}